%-----------------------------------------------------------------------
% Beginning of proc-l-template.tex
%-----------------------------------------------------------------------
%
%     This is a topmatter template file for PROC for use with AMS-LaTeX.
%
%     Templates for various common text, math and figure elements are
%     given following the \end{document} line.
%
%%%%%%%%%%%%%%%%%%%%%%%%%%%%%%%%%%%%%%%%%%%%%%%%%%%%%%%%%%%%%%%%%%%%%%%%

%     Remove any commented or uncommented macros you do not use.

\documentclass{proc-l}

%     If you need symbols beyond the basic set, uncomment this command.
%\usepackage{amssymb}

%     If your article includes graphics, uncomment this command.
%\usepackage{graphicx}

%     If the article includes commutative diagrams, ...
%\usepackage[cmtip,all]{xy}

%     Update the information and uncomment if AMS is not the copyright
%     holder.
%\copyrightinfo{2009}{American Mathematical Society}

\usepackage{lineno,hyperref}
\usepackage{amssymb,enumerate}
\usepackage{amsmath}
\usepackage{graphics}
\numberwithin{equation}{section}
\def\p{\partial}
\def\te{\textbf}

\def\p{\partial}

\newtheorem{thm}{Theorem}[section]

\newtheorem{lem}{Lemma}[section]

\newtheorem{rem}{Remark}[section]

\numberwithin{equation}{section}

\begin{document}

% \title[short text for running head]{full title}
\title{Formation of finite-time singularities for nonlinear elastodynamics with small initial disturbances}

%    Only \author and \address are required; other information is
%    optional.  Remove any unused author tags.

%    author one information
% \author[short version for running head]{name for top of paper}
\author{Zhentao Jin}
\address{School of Mathematical Sciences, Fudan University, Shanghai 200433, China.}
\email{ztjin18@fudan.edu.cn}
\thanks{}

%    author two information
\author{Yi Zhou}
\address{School of Mathematical Sciences, Fudan University, Shanghai 200433, China, and Department of Mathematical Sciences, Jinan University, Guangzhou 510632, China.}
\email{yizhou@fudan.edu.cn}
\thanks{}

%    \subjclass is required.
\subjclass[2010]{Primary }

\date{}

\dedicatory{}

%    "Communicated by" -- provide editor's name; required.

%    Abstract is required.
\begin{abstract}
This article concerns the formation of finite-time singularities in solutions to  quasilinear hyperbolic systems with small initial data. By constructing a special test function, we first present a simpler proof of the main result in \cite{sideris1985formation}:  the global classical solution is non-existent for compressible Euler equation even for some small initial data. Then we apply this approach to nonlinear elastodynamics and magnetohydrodynamics, showing that the classical solutions to these equations can still blow up in finite time even if the initial data is small enough.\\
\\
Keyword: Finite time blow up, small initial data, compressible Euler equation, elastodynamics, magnetohydrodynamics.
\end{abstract}

\maketitle
%%%%%%%%%%%%%%%%%%%%%%%%%%%%%%%%%%%%%%%%%%%%%%%%%%
\section{Introduction}
\par\quad
The Cauchy problem of quasilinear hyperbolic systems plays an important role in modern PDEs since many physical phenomena can be described by these systems, the Euler equations for ideal fluid, elastodynamics for elastic material and magnetohydrodynamics are all typical examples. Because singularity is fairly common for these systems, sometimes it is infeasible to find a global regular solution even though the initial data is smooth and small enough. As a consequence, the study of formation of singularities becomes a significant part of this area.

The general one-dimensional theory \cite{john1974formation,klainerman1980formation,lax1964development,liu1979development,LiTasien1994global} for these systems was established thanks to the method of characteristics, while in higher dimensions this powerful method is no longer effective. In 1985, by using averaged quantities, T. C. Sideris \cite{sideris1985formation} proved that there exist smooth initial data, such that the classical global solution to 3D compressible Euler equation is non-existent. His paper included two results, one for large initial data while another for small data. After that, the large data results were extended to relativistic fluid \cite{guo1999formation}, Euler-Poisson equation \cite{makino1990solution,perthame1990non}, Euler-Maxwell equation \cite{guo1999formation}, magnetohydrodynamics \cite{rammaha1989formation,rammaha1994formation}, elastodynamics \cite{tahvildar1998relativistic} etc. Later, for small initial data, S. Alinhac \cite{alinhac2001null1,alinhac2001null2} and D. Christodoulou \cite{christodoulou2014compressible} presented more complete description of singularity in dimensions of two and three respectively. Although their tools are effective for Euler equation, it seems that there still exist some serious difficulties to employ these approaches to elastodynamics and magnetohydrodynamics.

In this paper, we introduce a special test function different from \cite{sideris1985formation} to consider formation of singularities for Euler equations, elastodynamics and magnetohydrodynamics with small initial data. Our results show that the classical solutions to these systems have to break down in finite time. Under spherical symmetric assumption, F. John \cite{John1985Blow} proved 3D compressible elastodynamics¡¯ classical solution could break down in finite time even if the initial data is small enough. But for asymmetric case, there are only results under certain large data assumptions, our proof gives some small-initial-data results for this case. Besides, the natural structure of our test function makes our calculation much simpler.

This paper is organized as follows: In Section 2, we introduce our test function and give an ode lemma . Section 3 includes our main theorems, which indicates the blow-up results for Euler equation, elastodynamics and magnetohydrodynamics respectively.
\section{Test function}
Definite test function $F(x),\ x\in\mathbb{R}^n$ as follows:
\begin{equation}\label{testfunction}
\ F(x)=\left \{
\begin{aligned}
&e^x+e^{-x}\ \ \ &n=1,\\
&\int_{S^{n-1}}e^{\omega\cdot x}d\omega\ \ \ &n\geq 2.\\
\end{aligned} \right.
\end{equation}
we can conclude that:
\begin{align}
& (i).\ F(x)>0,\ \Delta F(x)=F(x),\\
& (ii).\ F(x)=F(|x|),\\
& (iii).\ F(x)\simeq (1+|x|)^{-\frac{n-1}{2}}e^{|x|}.
\end{align}
the property (i) and (ii) means $F(x)$ is positive and symmetric, while (iii) points out the growth of this function. The first two properties are obvious and the proof of (iii) can be found in \cite{LiTasien2017}.

Then, we need to introduce an ode lemma which will be frequently used in this paper:
\begin{lem}\label{OOO}
Let X(t) be a smooth function satisfies the following inequalities:
\begin{equation}\label{LeODE}
X^{''}(t)\geq \frac{CX^2(t)}{e^t(t+R_0)^{\frac{n-1}{2}}}+X(t),\ \ t\geq 0,\ 1\leq n\leq 3.
\end{equation}
with initial data $X(0)+X^{'}(0)=\varepsilon x_0\geq 0$ is small enough,\ then X(t) will blow up in finite time. Furthermore, the lifespan of $X(t)$ is:
\begin{equation}\label{lifespan}
\ T_0=\left \{
\begin{aligned}
&C\varepsilon^{-1}\ \ \ &n=1,\\
&C\varepsilon^{-2}\ \ \ &n=2,\\
&e^{C\varepsilon^{-1}}\ \ \ &n=3.\\
\end{aligned} \right.
\end{equation}
\end{lem}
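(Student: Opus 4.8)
The plan is to treat the differential inequality \eqref{LeODE} by a bootstrap/comparison argument that first captures the exponential lower bound coming from the linear term $X(t)$, and then feeds that bound back into the quadratic term to force a finite-time blow-up. First I would argue that $X(t)$ stays nonnegative and in fact grows at least like $\varepsilon x_0 e^{t}/2$ (or more precisely like $\varepsilon x_0 \cosh t$, using both pieces of the data $X(0)+X'(0)=\varepsilon x_0$): since $X''\ge X$ with the extra nonnegative term dropped, a standard comparison with the solution of $Y''=Y$, $Y(0)=X(0)$, $Y'(0)=X'(0)$ gives $X(t)\ge c\varepsilon x_0 e^{t}$ for $t\ge 1$, say. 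This is the ``linear phase'' and it requires only that $X(0)+X'(0)\ge 0$, not smallness.

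Next I would substitute this lower bound into \eqref{LeODE}: on the interval where $X(t)\ge c\varepsilon x_0 e^t$ we get
\begin{equation}
X''(t)\ge \frac{C c^2 \varepsilon^2 x_0^2\, e^{2t}}{e^{t}(t+R_0)^{\frac{n-1}{2}}}=\frac{C'\varepsilon^2 x_0^2\, e^{t}}{(t+R_0)^{\frac{n-1}{2}}},
\end{equation}
and then I would integrate twice. Integrating $e^t/(t+R_0)^{(n-1)/2}$ twice in $t$ still produces something comparable to $e^{t}/(t+R_0)^{(n-1)/2}$ up to constants (the polynomial weight is harmless under integration against $e^t$), so we obtain an improved lower bound of the form $X(t)\gtrsim \varepsilon^2 x_0^2 \,e^{t}/(t+R_0)^{(n-1)/2}$ valid for $t$ beyond some fixed time. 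Iterating this step $k$ times multiplies the amplitude by roughly $\varepsilon^{2^k}$-type factors but, more importantly, improves the power: after enough iterations (or, more cleanly, by setting up the iteration as in the classical John/Sideris scheme) one shows that for the critical case $n=3$ the bound $X(t)\ge A(t)$ blows up when $\log$ of the accumulated amplitude reaches order $1$, i.e. at $t\sim e^{C\varepsilon^{-1}}$; for $n=1,2$ the weight decays (or is absent) fast enough relative to the $e^t$ growth that a single clean reduction to an autonomous Riccati-type inequality $Z''\ge c Z^2$ or $Z''\ge c Z^2/(\text{const})$ already yields blow-up at $t\sim\varepsilon^{-1}$ and $t\sim\varepsilon^{-2}$ respectively.

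Concretely, for the lifespan estimates I would introduce the rescaled/weighted unknown and reduce \eqref{LeODE} to a Riccati inequality. Multiplying \eqref{LeODE} by $X'(t)$ (once we know $X'\ge 0$ on the relevant range) and integrating gives $(X')^2\ge \tfrac{2C}{3}\int \frac{X^2 X'}{e^t(t+R_0)^{(n-1)/2}}+X^2+\text{const}$; estimating the weight by its value at a reference time on a dyadic interval turns this into $(X')^2\ge c\,\frac{X^3}{e^{t}(t+R_0)^{(n-1)/2}}$, hence $X'/X^{3/2}\ge c\, e^{-t/2}(t+R_0)^{-(n-1)/4}$, and integrating this separable inequality from a fixed time $t_1$ shows $X^{-1/2}$ reaches $0$ — i.e. blow-up — once $\int_{t_1}^{t} e^{-s/2}(s+R_0)^{-(n-1)/4}\,ds$ exceeds $c/\sqrt{X(t_1)}\sim c/(\varepsilon\sqrt{x_0})$ (using the linear-phase bound $X(t_1)\gtrsim\varepsilon$). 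Since that integral is bounded for $n=3$, one needs $\varepsilon$ small but then must track the dependence more carefully through the iteration to get $e^{C\varepsilon^{-1}}$; for $n=1$ the relevant integral behaves linearly and for $n=2$ like $t^{1/2}$ after the substitution, giving the stated $C\varepsilon^{-1}$ and $C\varepsilon^{-2}$.

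The main obstacle I anticipate is the borderline case $n=3$: there the weight $e^{-t}(t+R_0)^{-1}$ makes the quadratic term too weak for a one-shot Riccati argument to see blow-up, so one genuinely needs the iteration (the ``ghost weight''/continued-improvement scheme à la John) and must bookkeep how the constants and the exponent of $(t+R_0)$ evolve at each stage, proving that the scheme closes exactly when $t\sim e^{C/\varepsilon}$ — neither sooner (which would overstate blow-up) nor never. Getting the sharp $e^{C\varepsilon^{-1}}$ rather than a weaker lower bound on the lifespan is the delicate point; the cases $n=1,2$ are comparatively routine once the linear phase is established.
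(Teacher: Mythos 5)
There is a genuine gap, and it sits exactly where you put the weight of the argument. The blow-up mechanism in \eqref{LeODE} lives entirely in the variable obtained after factoring out the exponential growth forced by the linear term, and your concrete steps never perform that factorization. In your Riccati step you arrive at $X'/X^{3/2}\gtrsim e^{-t/2}(t+R_0)^{-(n-1)/4}$; but this integrand is integrable in $t$ for every $n\le 3$ (even $n=1$), and since $X(t_1)\lesssim$ the linear-phase size $\varepsilon e^{t_1}$ is matched by the tail $\int_{t_1}^\infty e^{-s/2}(s+R_0)^{-(n-1)/4}ds\sim e^{-t_1/2}(t_1+R_0)^{-(n-1)/4}$, the separable inequality never forces $X^{-1/2}$ to hit zero for small $\varepsilon$, no matter how long you wait: the $e^{-t}$ in the weight exactly cancels the $e^{t}$ growth of $X$, so the deficit of order $\varepsilon^{-1/2}$ is never recovered. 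Thus the one-shot argument fails not only for $n=3$ but for $n=1,2$ as well; the ``linear'' and ``$t^{1/2}$'' behaviour you invoke only appears after substituting $X=e^tZ$ (and $\tau=\ln(t+R_0)$), which your computation does not do. The fallback you offer, the feedback iteration, also does not close as described: inserting $X\gtrsim \varepsilon e^t$ into the quadratic term and integrating twice gives $X\gtrsim \varepsilon^2 e^t(t+R_0)^{-(n-1)/2}$, and each further pass squares the small amplitude while increasing the power of $(t+R_0)$ in the denominator, producing bounds $a_k e^t(t+R_0)^{-p_k}$ with $a_k\sim\varepsilon^{2^k}$, $p_k\sim 2^k$, which tend to zero at every fixed $t$; there is no stage at which ``the accumulated amplitude reaches order one''. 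A critical-case iteration with logarithmic gains could be designed, but that is precisely the delicate bookkeeping you acknowledge you have not done, so the $e^{C\varepsilon^{-1}}$ lifespan (and even plain blow-up) is not established. A smaller point: from $X''\ge X$ and the hypothesis you only get $X(t)\ge X(0)e^{-t}+\varepsilon x_0\sinh t$, and $X(0)$ alone is not controlled by the lemma's assumptions, so the start of your ``linear phase'' needs care.

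For comparison, the paper's proof is short and bypasses all sign/monotonicity issues: set $X=e^tZ$ and $\tau=\ln(t+R_0)$, so \eqref{LeODE} becomes $\frac{d}{d\tau}\bigl(e^{-\tau}Z_\tau\bigr)+2Z_\tau\ge CZ^2$ with data of size $\varepsilon$; then multiply by a cutoff $\chi^4(\tau/T)$, integrate by parts, and absorb all cutoff terms via Cauchy--Schwarz into $\int\chi^4Z^2\,d\tau$, leaving $\varepsilon\lesssim T^{-1}$. Hence $T\lesssim\varepsilon^{-1}$ in the $\tau$ variable, i.e.\ lifespan $e^{C\varepsilon^{-1}}$ in $t$ for $n=3$, with the $n=1,2$ cases ($C\varepsilon^{-1}$, $C\varepsilon^{-2}$) following from the same scheme. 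If you want to salvage your route, you must first pass to $Z$ (Riccati in the slow variable) and then either control the $Z''$ term honestly or adopt such a test-function argument.
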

\begin{proof}
Without loss of generality, we consider $R_0=1,\ n=3$ only. Take $X(t)=e^tZ(t)$, the inequality becomes:
\begin{equation}
Z^{''}+2Z^{'}\geq \frac{CZ^2}{t+1}
\end{equation}
Next, let $t+1=e^\tau $, or $\tau=\ln (t+1)$, then:
\begin{align*}
\frac{dZ}{dt}=e^{-\tau}\frac{dZ}{d\tau};\ \ \frac{d^2Z}{dt^2}=e^{-\tau}\frac{d}{d\tau}(e^\tau\frac{dZ}{d\tau})
\end{align*}
and
\begin{equation}\label{ODEttau}
\frac{d}{d\tau}(e^\tau\frac{dZ}{d\tau})+2\frac{dZ}{d\tau}\geq CZ^2
\end{equation}
Assume $Z(\tau)$ exists in $[0,T]$, define cut-off function $\chi(\tau)\in C^{\infty}([0,+\infty))$ as follows:
\begin{equation}
\chi(\tau)=\left \{
\begin{aligned}
&1\ \   \ \ &\tau<\frac{1}{8}\\
&\in[0,1] \ \  &\frac{1}{8}<\tau<\frac{7}{8}\\
&0\ \ \ \ &\tau>\frac{7}{8}.
\end{aligned} \right.
\end{equation}
Multiply the inequality \eqref{ODEttau} by $\chi^4(\frac{\tau}{T})$ and integration from 0 to T:
\begin{align}
\int_0^T(\frac{d}{d\tau}(e^{-\tau}\frac{dZ}{d\tau})+2\frac{dZ}{d\tau})\chi^4(\frac{\tau}{T})d\tau\geq C\int_0^T\chi^4(\frac{\tau}{T})Z^2d\tau
\end{align}
After that, using integration by parts, notice that $\chi(1)=0,\ \chi^{'}(0)=0$, we have:
\begin{align*}
&-4\int_0^Te^{-\tau}Z(\tau)\chi^3(\frac{\tau}{T})\chi^{'}(\frac{\tau}{T})\frac{1}{T}d\tau+12\int_0^Te^{-\tau}Z(\tau)\chi^2(\frac{\tau}{T})(\chi^{'}(\frac{\tau}{T}))^2\frac{1}{T^2}d\tau\\
&+4\int_0^Te^{-\tau}Z(\tau)\chi^3(\frac{\tau}{T})\chi^{''}(\frac{\tau}{T})\frac{1}{T^2}d\tau-8\int_0^TZ(\tau)\chi^3(\frac{\tau}{T})\chi^{'}(\frac{\tau}{T})\frac{1}{T}d\tau\\
&\geq Z^{'}(0)+2Z(0)+C\int_0^T\chi^4(\frac{\tau}{T})Z^2(\tau)d\tau
\end{align*}
By Cauchy-Schwartz inequality, since $\chi(\tau),\chi^{'}(\tau)$ is bounded, get:
\begin{align}
Z^{'}(0)+2Z(0)+C\int_0^T\chi^4(\frac{\tau}{T})Z^2(\tau)d\tau&\leq \tilde C(\frac{1}{T^{\frac{3}{2}}}+\frac{1}{T^\frac{1}{2}})(\int_0^T\chi^4(\frac{\tau}{T})Z^2(\tau)d\tau)^{\frac{1}{2}}\notag\\
&\leq \bar{C}(\frac{1}{T}+\frac{1}{T^3})+C\int_0^T\chi^4(\frac{\tau}{T})Z^2(\tau)d\tau
\end{align}

Since $Z(0)=X(0),\ Z^{'}(0)=-X(0)+X^{'}(0)$, we have $T<\frac{\bar C}{X(0)+X^{'}(0)}=\bar C\varepsilon^{-1}$. And $t+1=\tau$, so the life span of X(t) is $e^{C\epsilon^{-1}}$, X(t) will blow up in finite time.

The case $n=1,2$ can be treated in a similar way.
\end{proof}
\section{Main results}
\subsection{Compressible Euler equation}
First of all, we give a different proof of Sideris' theorem by using our test function $F(x)$. The following Euler equation represents the state of the inviscid compressible flow in three dimensions:

\begin{equation}\label{Euler}
\left \{
\begin{aligned}
&\rho_t+div(\rho \textbf{u})=0,\\
&(\rho\textbf{u})_t+div(\rho\textbf{u}\otimes\textbf{u})+\nabla p=0,\\
&t=0:\ \ \textbf{u}=\textbf u_0(x),\  \rho=1+\rho_0(x). \\
\end{aligned} \right.
\end{equation}
where $\rho,\textbf{u}$ and $p$ are density, velocity and pressure. Without loss of generality, we can assume $p(\rho)=\frac{1}{2}\rho^2$, then the speed of sound $c^2=p^{'}(\rho)=\rho$.  Besides, the initial data $\textbf u_0(x),\rho_0(x)$ satisfies:
\begin{align}\label{Eulerinitialdata}
\textbf u_0(x),\rho_0(x)&\in C_0^\infty(\mathbb{R}^3),\ \\
supp\  \textbf u_0(x),\rho_0(x)&\subset \{x| |x|\leq R_0.\}\notag
\end{align}
these conditions ensure that $supp\ \textbf u(t,x),\rho(x)\subset \{x| |x|\leq R_0+t\}$. For the convenience, always take $R_0=1$.
\begin{thm}\label{Euletthm}
Suppose $(\rho,\textbf u)$ is a classical solution of \eqref{Euler}\eqref{Eulerinitialdata} for $0<t<T$. If the initial data satisfies:
 \begin{equation}
 \int_{\mathbb R^3}\int_{S^2}e^{\omega\cdot x}\rho_0\ d\omega dx+\int_{\mathbb R^3}\int_{S^2}e^{\omega\cdot x}(1+\rho_0)(\textbf u_0\cdot \omega)\ d\omega dx>0
 \end{equation}
then the life span T is less than $e^{C\varepsilon^{-1}}$.
\end{thm}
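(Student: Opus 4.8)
The strategy is to test the equations against the function $F$ built in Section 2 and then invoke Lemma \ref{OOO}. Put $\sigma(t,x)=\rho(t,x)-1$ and define
\[
X(t)=\int_{\mathbb R^3}F(x)\,\sigma(t,x)\,dx ,
\]
a finite, smooth function of $t$ as long as the classical solution exists, since $\sigma(t,\cdot)$ is supported in $\{|x|\le R_0+t\}$. Differentiating and using the continuity equation gives $X'(t)=-\int F\,\mathrm{div}(\rho\textbf u)\,dx=\int\nabla F\cdot\rho\textbf u\,dx$; differentiating once more and substituting the momentum equation yields $X''(t)=-\int\nabla F\cdot\big(\mathrm{div}(\rho\textbf u\otimes\textbf u)+\nabla p\big)\,dx$. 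The convection term becomes $\int\partial_i\partial_j F\,\rho u_iu_j\,dx$ after an integration by parts (all the relevant functions have compact support). For the pressure term one integrates by parts only over $\{|x|\le R_0+t\}$, where $\nabla p$ is supported; since $p\equiv\tfrac12$ on the boundary sphere and $\Delta F=F$, the boundary contribution is $-\tfrac12\int_{|x|=R_0+t}\partial_\nu F\,dS=-\tfrac12\int_{|x|\le R_0+t}F\,dx$, and one gets $-\int\nabla F\cdot\nabla p\,dx=\int F\,(p-\tfrac12)\,dx$.

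Next I would exploit the signs. Because $\partial_i\partial_j F(x)=\int_{S^2}\omega_i\omega_j e^{\omega\cdot x}\,d\omega$, the convection term equals $\int\rho\big(\int_{S^2}(\omega\cdot\textbf u)^2 e^{\omega\cdot x}\,d\omega\big)dx\ge0$ (the density of a small-data classical solution stays positive). Since $p=\tfrac12\rho^2$ we have $p-\tfrac12=\sigma+\tfrac12\sigma^2$, hence $\int F(p-\tfrac12)\,dx=X(t)+\tfrac12\int F\sigma^2\,dx$, and therefore
\[
X''(t)\ \ge\ X(t)+\tfrac12\int_{\mathbb R^3}F\,\sigma^2\,dx .
\]
To convert the quadratic term into $X^2$, I would bound the weight by property (iii) of $F$: for $R_0=1$,
\[
\int_{|x|\le R_0+t}F\,dx\ \lesssim\ \int_0^{1+t}\frac{e^r}{1+r}\,r^2\,dr\ \lesssim\ (1+t)\,e^t ,
\]
so Cauchy--Schwarz gives $X(t)^2=\big(\int_{|x|\le R_0+t}F\sigma\,dx\big)^2\le\big(\int_{|x|\le R_0+t}F\,dx\big)\big(\int F\sigma^2\,dx\big)\le C(1+t)e^t\int F\sigma^2\,dx$. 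Plugging this in, $X$ satisfies $X''(t)\ge cX(t)^2/\big(e^t(1+t)\big)+X(t)$ for $t\ge0$, which is exactly the hypothesis of Lemma \ref{OOO} with $n=3$ and $R_0=1$.

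It remains to identify the initial data. From the formulas above, $X(0)=\int F\rho_0\,dx=\int_{\mathbb R^3}\int_{S^2}e^{\omega\cdot x}\rho_0\,d\omega\,dx$ and $X'(0)=\int\nabla F\cdot(1+\rho_0)\textbf u_0\,dx=\int_{\mathbb R^3}\int_{S^2}e^{\omega\cdot x}(1+\rho_0)(\textbf u_0\cdot\omega)\,d\omega\,dx$, so the hypothesis of the theorem is precisely $X(0)+X'(0)>0$; writing $X(0)+X'(0)=\varepsilon x_0$ with $x_0>0$ fixes the small parameter. Lemma \ref{OOO} then forces $X(t)\to+\infty$ before time $e^{C\varepsilon^{-1}}$, which is impossible while the classical solution (hence $X$) exists, so $T<e^{C\varepsilon^{-1}}$. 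The point I expect to require the most care is the bookkeeping in the integrations by parts: the background values $\rho\equiv1$, $p\equiv\tfrac12$ do not decay while $F$ grows like $e^{|x|}$, so one must consistently use finite propagation speed to decide which boundary terms vanish and which survive; and it is the sharp growth rate $\int_{|x|\le R_0+t}F\,dx\simeq(1+t)e^t$ coming from property (iii) that produces precisely the weight $e^t(t+R_0)^{(n-1)/2}$ of Lemma \ref{OOO}, and therefore the lifespan bound $e^{C\varepsilon^{-1}}$.
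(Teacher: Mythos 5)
Your proposal is correct and follows essentially the same route as the paper: test the continuity and momentum equations against $F$ and $\nabla F$ (equivalently $e^{\omega\cdot x}$ and $e^{\omega\cdot x}\omega$), drop the nonnegative convection term, split $p-\tfrac12=\sigma+\tfrac12\sigma^2$, apply Cauchy--Schwarz with the bound $\int_{|x|\le 1+t}F\,dx\lesssim (1+t)e^t$, and invoke Lemma \ref{OOO} with $n=3$. The only cosmetic difference is that you track the boundary term from the pressure explicitly over $\{|x|\le R_0+t\}$, whereas the paper subtracts the background value first so the integrand is compactly supported; the resulting differential inequality and lifespan bound are identical.
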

\begin{proof}
Multiply \eqref{Euler} by $e^{\omega\cdot x}$ and integration by parts to get:
\begin{align}\label{Euler_1}
&\int_{\mathbb R^{3}}\int_{S^2}(\rho_t+\nabla(\rho\textbf u))e^{\omega\cdot x}\ d\omega dx\\
=&\frac{d}{dt}\int_{\mathbb R^{3}}\int_{S^2}\rho e^{\omega\cdot x}\ d\omega dx-\int_{\mathbb R^{3}}\int_{S^2}\rho e^{\omega\cdot x}(\textbf u \cdot {\omega})\ d\omega dx\notag\\
=&0\notag
\end{align}
Next, multiply \eqref{Euler} by $e^{\omega\cdot x}\omega$, get:
\begin{align}\label{Euler_2}
&\int_{\mathbb{R}^3}\int_{S^2}e^{\omega\cdot x}(\sum_{i=1}^{3}\frac{\partial(\rho u_i \omega_i)}{\partial t}+\sum_{i,k=1}^{3}\frac{\partial(\rho u_iu_k\omega_i)}{\partial x_k}+\sum_{i=1}^{3}\omega_i\frac{\partial p(\rho)}{\partial x_i})\ d\omega dx\\
=&\frac{d}{dt}\int_{\mathbb{R}^3}\int_{S^2}e^{\omega\cdot x}\rho(\textbf u\cdot \omega)\ d\omega dx-\int_{\mathbb{R}^3}\int_{S^2}e^{\omega\cdot x}\rho(\textbf u\cdot \omega)^2\ d\omega dx\notag\\
-&\frac{1}{2}\int_{\mathbb{R}^3}\int_{S^2}(\rho^2-1)e^{\omega\cdot x}\ d\omega dx\notag\\
=&0\notag
\end{align}
If we take:
\begin{align}
&X(t)=\int_{\mathbb R^3}F(x)(\rho(t,x)-1)\ dx=\int_{\mathbb R^3}\int_{S^2}e^{\omega\cdot x}(\rho(t,x)-1)\ d\omega dx\\
&Y(t)=\int_{\mathbb R^3}F(x)\rho(t,x)(\textbf u\cdot\omega)\ dx=\int_{\mathbb R^3}\int_{S^2}e^{\omega\cdot x}\rho(t,x)(\textbf u\cdot \omega)\ d\omega dx
\end{align}
then from \eqref{Eulerinitialdata}, \eqref{Euler_1} and \eqref{Euler_2} the relation between X(t) and Y(t) follows:
\begin{align}\label{X(t)Y(t)}
X^{'}(t)&=Y(t),\\
Y^{'}(t)&=\int_{|x|\leq t+1}\int_{S^2}e^{\omega\cdot x}\rho(\textbf u\cdot \omega)^2\ d\omega dx+\frac{1}{2}\int_{|x|\leq t+1}\int_{S^2}(\rho^2-1)e^{\omega\cdot x}\ d\omega dx\\
&=\int_{|x|\leq t+1}\int_{S^2}e^{\omega\cdot x}\rho(\textbf u\cdot \omega)^2\ d\omega dx+\frac{1}{2}\int_{|x|\leq t+1}\int_{S^2}(\rho-1)^2e^{\omega\cdot x}\ d\omega dx\notag\\
&+\int_{|x|\leq t+1}\int_{S^2}(\rho-1)e^{\omega\cdot x}\ d\omega dx\notag\\
&\geq \frac{1}{2}\int_{|x|\leq t+1}\int_{S^2}(\rho-1)^2e^{\omega\cdot x}\ d\omega dx+X(t)\notag
\end{align}
By H\"{o}lder inequality:
\begin{align*}
&X(t)\leq (\int_{|x|\leq t+1}\int_{S^2}e^{\omega\cdot x}(\rho-1)^2\ d\omega dx)^{\frac{1}{2}}\cdot(\int_{|x|\leq t+1}\int_{S^2}e^{\omega\cdot x}d\omega dx)^{\frac{1}{2}}
\end{align*}
that is:
\begin{align}
\int_{|x|\leq t+1}\int_{S^2}e^{\omega\cdot x}(\rho^2-1)\ d\omega dx \geq \frac{X^2}{\int_{|x|\leq t+1}\int_{S^2}e^{\omega\cdot x}d\omega dx}
\end{align}
Since:
\begin{align}
&\int_{|x|\leq t+1}\int_{S^2}e^{\omega\cdot s}d\omega dx=\int_{|x|\leq t+1}F(x)\ dx \\
\leq& C\int_0^{t+1}e^r(r+1)^{-1}r^{n-1}\ dr \notag\\
\leq& Ce^{t+1}(t+1)\int_0^{t+1}e^{r-t-1}\ dr\notag\\
\leq& Ce^{t+1}(t+1).\notag
\end{align}
Finally, we can get an ordinary differential inequality:
\begin{equation}\label{ODE}
\left \{
\begin{aligned}
&X^{'}(t)=Y(t),\\
&Y^{'}(t)\geq \frac{CX^2(t)}{e^t(t+1)}+X(t).\\
\end{aligned} \right.
\end{equation}

Thus, from lemma \ref{OOO}, we complete the proof of theorem \eqref{Euletthm}.
\end{proof}
\begin{rem}
We can give another simpler proof of this theorem. In fact, decompose $x=(y,z),\textbf u=(\textbf v,\textbf w)$, in which $y,\textbf v\in \mathbb{R}^d,\ z,\textbf w\in \mathbb{R}^{n-d}$ $(1\leq d \leq n-1)$, the compressible Euler equations becomes:
\begin{equation}\label{EulerDec}
\left \{
\begin{aligned}
&\rho_t+\nabla_y\cdot(\rho \textbf{v})+\nabla_z\cdot(\rho \textbf{w})=0,\\
&\partial_t(\rho \textbf v)+\nabla_y\cdot(\rho \textbf v\otimes \textbf v)+\nabla_z\cdot(\rho \textbf v\otimes\textbf w)+\nabla_y(p(\rho))=0,\\
&\partial_t(\rho \textbf w)+\nabla_z\cdot(\rho \textbf w\otimes \textbf w)+\nabla_y\cdot(\rho \textbf w\otimes\textbf v)+\nabla_z(p(\rho))=0.\\
\end{aligned} \right.
\end{equation}

For simplicity, take d=1. Follow the same steps, let F(y),\ X(t),\ Y(t) be:
\begin{align}
& F(y)=e^y+e^{-y},\\
& X(t)=\int_{\mathbb{R}^{n-1}}\int_\mathbb{R}(\rho(t,y,z)-1)(e^y+e^{-y})\ dydz,\\
& Y(t)=\int_{\mathbb{R}^{n-1}}\int_\mathbb{R}\rho(t,y,z) v(t,y,z)(e^y+e^{-y})\ dydz.
\end{align}
Consider the first two equations of \eqref{EulerDec} only, multiply $e^y+e^{-y}$ and $e^y-e^{-y}$ respectively, then integration by parts,since that:
\begin{align*}
&\int\int_{y^2+z^2\leq (t+1)^2}F(y)\ dz dy=\int\int_{y^2+z^2\leq (t+1)^2}e^y+e^{-y}\ dz dy\\
=&2\int_0^{t+1}\int_{|z|\leq\sqrt{(t+1)^2}-y^2} e^y\ dz dy\leq C\int_0^{t+1}e^y((t+1)^2-y^2)^{\frac{n-1}{2}}dy\\
\leq& Ce^{t+1}(t+1)^{\frac{n-1}{2}}\int_0^{t+1}e^{y-t-1}(t+1-y)^{\frac{n-1}{2}}dy\\
=& Ce^{t+1}(t+1)^{\frac{n-1}{2}}\int_0^{t+1}e^{-r}r^{\frac{n-1}{2}}dr\leq Ce^{t+1}(t+1)^{\frac{n-1}{2}}
\end{align*}

Then, we can get the similar inequality:
\begin{equation}
X^{''}(t)\geq X(t)+\frac{CX^2(t)}{e^{t+1}(t+1)^{\frac{n-1}{2}}}
\end{equation}
When $1\leq n\leq 3$, with appropriate small initial data, $X(t)$ blows up in finite time.
\end{rem}

Furthermore, if there is viscosity in z direction only, this method can still work.
\begin{thm}
Consider the following initial boundary value problem for the compressible Navier-Stokes equations without vertical viscosity:
\begin{equation*}
\left \{
\begin{aligned}
&\rho_t+\nabla_y\cdot(\rho \textbf{v})+\nabla_z\cdot(\rho \textbf{w})=0,\\
&\partial_t(\rho \textbf v)+\nabla_y\cdot(\rho \textbf v\otimes \textbf v)+\nabla_z\cdot(\rho \textbf v\otimes\textbf w)+\nabla_y(p(\rho))=0,\\
&\partial_t(\rho \textbf w)+\nabla_z\cdot(\rho \textbf w\otimes \textbf w)+\nabla_y\cdot(\rho \textbf w\otimes\textbf v)+\nabla_z(p(\rho))=\lambda\Delta_z w+\mu\nabla_z(\nabla_z\cdot \textbf w),\\
&t=0:\ \textbf v=\textbf v_0(y,z),\ \textbf w=\textbf w_0(y,z),\  \rho=1+\rho_0(y,z).
\end{aligned} \right.
\end{equation*}
where $y\in \mathbb{R}^d,\ z\in\Omega\subset\mathbb{R}^{n-d},\ \textbf v\in \mathbb{R}^d,\ \textbf w\in R^{n-d}$, $1\leq d \leq n-1\leq 3$, $\Omega$ is a bounded domain. If the initial data of this system satisfies:
\begin{align}
&\textbf v_0(y,z),\textbf w_0(y,z),\rho_0(y,z)\in C^\infty(\mathbb R^d\times \Omega),\\
&supp\  \textbf v_0(y,z),\textbf w_0(y,z),\rho_0(y,z)\subset \{y| |y|\leq 1\}.
\end{align}
Besides, $\te w$ satisfies Dirichlet boundary condition:
\begin{equation}
\textbf w(y,z)|_{z\in \partial \Omega}=0.
\end{equation}
Then if the initial data satisfies:
\begin{equation}\label{331}
\int_\Omega\int_{\mathbb{R}^d}\int_{S^{d-1}}\rho_0e^{\omega\cdot y}\ d\omega dydz+\int_\Omega\int_{\mathbb{R}^d}\int_{S^{d-1}}(1+\rho_0) (\textbf v_0\cdot\omega)\ d\omega dydz>0
\end{equation}
the classical solution to the equation above blows up in finite time.
\end{thm}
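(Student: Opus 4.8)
The plan is to repeat the proof of Theorem~\ref{Euletthm} almost verbatim, now letting the test function act only on the unbounded variables $y$ while the $z$-integration is disposed of by the Dirichlet condition. Put $F(y)=e^{y}+e^{-y}$ when $d=1$ and $F(y)=\int_{S^{d-1}}e^{\omega\cdot y}\,d\omega$ when $d\ge2$, so that $F$ has properties (i)--(iii) with $n$ replaced by $d$, and set
\begin{align*}
X(t)&=\int_{\Omega}\int_{\mathbb R^{d}}\int_{S^{d-1}}e^{\omega\cdot y}\,(\rho(t,y,z)-1)\,d\omega\,dy\,dz,\\
Y(t)&=\int_{\Omega}\int_{\mathbb R^{d}}\int_{S^{d-1}}e^{\omega\cdot y}\,\rho(t,y,z)\,(\textbf v\cdot\omega)\,d\omega\,dy\,dz.
\end{align*}
The key observation is that the argument only uses the continuity equation and the horizontal momentum equation; the viscosity, which sits entirely in the $\textbf w$-equation, never enters the computation. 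Thus the absence of horizontal viscosity is exactly what makes the scheme applicable, while the vertical viscosity is harmless.

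First I would multiply the continuity equation by $e^{\omega\cdot y}$ and integrate over $\mathbb R^{d}\times\Omega\times S^{d-1}$. Integrating $\nabla_{z}\cdot(\rho\textbf w)$ over $\Omega$ produces $\int_{\partial\Omega}\rho\,(\textbf w\cdot n)\,e^{\omega\cdot y}\,dS=0$ because $\textbf w|_{\partial\Omega}=0$, while $\nabla_{y}\cdot(\rho\textbf v)$, integrated by parts in $y$ (with no boundary term, by finite propagation speed, see below), contributes $-\int\rho(\textbf v\cdot\omega)e^{\omega\cdot y}$; this gives $X'(t)=Y(t)$. Next I would multiply the $\textbf v$-momentum equation by $\omega_{i}e^{\omega\cdot y}$ and sum in $i$. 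The cross term $\nabla_{z}\cdot(\rho\,\textbf v\otimes\textbf w)$ again yields a boundary integral over $\partial\Omega$ proportional to $\textbf w\cdot n$ and vanishes; the convective term contributes $-\int\rho(\textbf v\cdot\omega)^{2}e^{\omega\cdot y}$; and the pressure term, after replacing $p(\rho)$ by $p(\rho)-p(1)=\tfrac12(\rho^{2}-1)$ and using $|\omega|^{2}=1$, contributes $-\tfrac12\int(\rho^{2}-1)e^{\omega\cdot y}$. Hence
\begin{equation*}
Y'(t)=\int_{\Omega}\int_{\mathbb R^{d}}\int_{S^{d-1}}e^{\omega\cdot y}\rho(\textbf v\cdot\omega)^{2}\,d\omega\,dy\,dz+\frac12\int_{\Omega}\int_{\mathbb R^{d}}\int_{S^{d-1}}(\rho^{2}-1)e^{\omega\cdot y}\,d\omega\,dy\,dz,
\end{equation*}
and writing $\rho^{2}-1=(\rho-1)^{2}+2(\rho-1)$ and discarding the nonnegative pieces gives $Y'(t)\ge\tfrac12\int\int\int(\rho-1)^{2}e^{\omega\cdot y}+X(t)$.

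To close the system I would use that the perturbation stays supported in $\{|y|\le 1+t\}\times\Omega$ by finite propagation speed in the $y$-directions (the system being hyperbolic in $y$, the parabolic term acting purely in $z$), which simultaneously kills the $y$-boundary terms used above. Then, by Cauchy--Schwarz, $X(t)^{2}\le\big(\int\int\int(\rho-1)^{2}e^{\omega\cdot y}\big)\cdot|\Omega|\int_{|y|\le 1+t}F(y)\,dy$, and property (iii) together with the boundedness of $\Omega$ gives $|\Omega|\int_{|y|\le 1+t}F(y)\,dy\le Ce^{t}(t+1)^{(d-1)/2}$. Combining, $X''(t)=Y'(t)\ge\dfrac{CX^{2}(t)}{e^{t}(t+1)^{(d-1)/2}}+X(t)$, and since \eqref{331} says precisely $X(0)+X'(0)>0$, Lemma~\ref{OOO} with $n=d\in\{1,2,3\}$ forces $X(t)$, hence the solution, to break down in finite time.

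The only genuinely new point, and the one I expect to be the main obstacle, is the rigorous justification of finite propagation speed in the $y$-variable for this coupled hyperbolic--parabolic system: one should run a truncated-cone energy estimate in $\{|y-y_{0}|\le\sigma(t_{0}-t)\}\times\Omega$ and verify that the vertical dissipation $\lambda\Delta_{z}\textbf w+\mu\nabla_{z}(\nabla_{z}\cdot\textbf w)$, together with the Dirichlet condition, contributes only sign-definite terms, so that the lateral flux in $y$ is still controlled by the characteristic speeds of the purely hyperbolic horizontal part, exactly as in the inviscid case. Everything else is a line-by-line transcription of the proof of Theorem~\ref{Euletthm}, the boundedness of $\Omega$ being what keeps all constants (in particular the $z$-measure) finite.
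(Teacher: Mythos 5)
Your proposal is correct and follows essentially the same route as the paper: the same test function in $y$ only, the same $X(t)$, $Y(t)$, the same use of the Dirichlet condition to kill the $z$-boundary terms, the same Cauchy--Schwarz estimate with the factor $|\Omega|$, and the same reduction to Lemma~\ref{OOO} with $n=d$. The only difference is that you explicitly flag the finite-propagation-speed/support issue in $y$ for the hyperbolic--parabolic coupling, which the paper leaves implicit; your sketched truncated-cone energy argument is the natural way to close that point.
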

\begin{proof}
Again, let us define $F(y),\ X(t),\ Y(t)$ :
\begin{align}
& F(y)=\int_{S^{d-1}}e^{\omega\cdot y}\ d\omega,\\
& X(t)=\int_\Omega\int_{\mathbb{R}^d}\int_{S^{d-1}}(\rho(t,y,z)-1)e^{\omega\cdot y}\ d\omega dydz,\\
& Y(t)=\int_\Omega\int_{\mathbb{R}^d}\int_{S^{d-1}}\rho(t,y,z) (\textbf v(t,y,z)\cdot\omega)\ d\omega dydz.
\end{align}
Multiply $e^{\omega\cdot y}$ and $e^{\omega\cdot y}\omega$ to the first two equations, integrate $\omega,\ y,\ z$. Thanks to the Dirichlet boundary condition $\textbf w(y,z)|_{z\in \partial \Omega}=0$, all the boundary term disappear, we can get the ordinary differential inequality as usual:
\begin{equation}
\left \{
\begin{aligned}
&X^{'}(t)=Y(t),\\
&Y^{'}(t)\geq \frac{CX^2(t)}{e^t(t+1)^{\frac{d-1}{2}}|\Omega|}+X(t).\\
\end{aligned} \right.
\end{equation}

Therefore, for initial data \eqref{331}, classical solution still blows up in finite time.
\end{proof}
\subsection{Compressible elastodynamics}
We consider the displacement of an isotropic, homogeneous, hyperelastic material, set Lagrange function as follows:
\begin{equation}
\mathcal{L}(\textbf u)=\int\int \frac{1}{2}|\textbf u_t|^2-W(\nabla \textbf u)\ dxdt.
\end{equation}

In this definition, $\textbf u$ represents the displacement from the reference configuration while $W(\nabla\textbf u)$ means stored energy function. Since material is isotropic, that is:
\begin{align}
W(F)=W(QF),\ \forall Q\in SO(3);\\
W(F)=W(FQ),\ \forall Q\in SO(3).
\end{align}

Using Hamilton's principle, the corresponding Euler-Lagrange equation is:
\begin{equation}
\frac{\p^2 u^i}{\p t^2}-\p_k(\frac{\p W}{\p u^i_{x^k}}(\nabla \te u))=0
\end{equation}

Ignore cubic and higher order terms, the motion for these materials can be described by:
\begin{equation}\label{elastic}
\p_t^2\te u-c_2^2\Delta\te u-(c_1^2-c_2^2)\nabla(\nabla\cdot\te u)=F(\nabla\te u,\nabla^2\te u)\ \ .
\end{equation}

In which the material constants $c_1,\ c_2\ (c_1>c_2>0)$ correspond to the propagation speeds of longitudinal and transverse waves, and F is nonlinear term, without loss of generality, always assume $c_1=1$.

For three dimensional isotropic hyperelastic, we can expand nonlinear term explicitly (see \cite{agemi2000global}):
\begin{align}
F(\nabla\te u,\nabla^2\te u)
&=2(2\sigma_{111}+3\sigma_{11})\nabla(\nabla\cdot \textbf u)^2+2(\sigma_{11}-\sigma_{12})\nabla|\nabla\times\textbf u|^2\\
&-4(\sigma_{11}-\sigma_{12})\nabla\times(\nabla\cdot\textbf u\nabla\times\textbf u)+Q(\textbf u,\nabla \textbf u)\notag\\
& \notag\\
Q^i(\textbf u,\nabla \textbf u)&=Q^i_1(\textbf u,\nabla \textbf u)+Q^i_2(\textbf u,\nabla \textbf u)\\
& \notag\\
Q^i_1(\textbf u,\nabla \textbf u)&=4(2\sigma_{12}-\sigma_{11})\sum_{j,k=1}^3(Q_{jk}(u^j,\partial_iu^k)-Q_{ik}(u^k,\partial_ju^j))\\
Q^i_2(\textbf u,\nabla \textbf u)&=2(\sigma_2-\sigma_3)\sum_{j,k=1}^3(2Q_{jk}(\partial_ju^i,u^k)+Q_{jk}(\partial_ku^j,u^i)+Q_{ij}(\partial_ju^k,u^k))\\
&+2\sigma_3\sum_{j,k=1}^3(2Q_{ij}(\partial_ku^k,u^j)+Q_{ji}(\partial_ku^j,u^k))\notag\\
here\ Q_{ij}(f,g)&=\partial_if\partial_jg-\partial_ig\partial_jf,\ 4\sigma_{11}=c_1^2=1,\ -2\sigma_2=c_2^2<1.\notag
\end{align}

Take the initial data:
\begin{align}\label{elasticinitialdata}
t=0:\ \textbf u=&\varepsilon\textbf u_0(x),\ \textbf u_t=\varepsilon\textbf u_1(x)\\
\textbf u_0(x),\textbf u_1(x)\in C_0^\infty,\ \ &supp\ \textbf u_0,\textbf u_1(x)\subset \{x||x|\leq 1\}\notag
\end{align}
\begin{thm}
Consider 3D compressible elastodynamics \eqref{elastic}\eqref{elasticinitialdata}, if the structural constants satisfy:
\begin{align}\label{sigmaconditions}
\sigma_{111}=-O(\lambda^2)<0,\ \sigma_{12}=O(\lambda)>0,\ |\sigma_3|<<\lambda.
\end{align}
where $\lambda>>1$ is a constant. Then with the following initial data:
\begin{equation}\label{3311}
\int_{\mathbb{R}^3}\int_{S^2}e^{\omega\cdot x} (\te u_0+\te u_1)\cdot\omega d\omega dx>0
\end{equation}
the classical solution to this equation will blow up in finite time and the life-span is $e^{C\varepsilon^{-1}}$, C depends on $\lambda$.
\end{thm}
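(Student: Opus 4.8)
The plan is to mimic exactly the argument that was used for the compressible Euler equation in Theorem \ref{Euletthm}, but now testing the elastodynamics system \eqref{elastic} against $e^{\omega\cdot x}\omega$ and integrating over $x\in\mathbb R^3$ and $\omega\in S^2$. Concretely, set
\begin{equation}
X(t)=\int_{\mathbb R^3}\int_{S^2}e^{\omega\cdot x}\,(\textbf u(t,x)\cdot\omega)\,d\omega dx,
\end{equation}
so that the initial condition \eqref{3311} together with \eqref{elasticinitialdata} gives $X(0)+X'(0)=\varepsilon x_0>0$ small. The first step is to compute $X''(t)$ by pairing the left-hand side of \eqref{elastic} with $e^{\omega\cdot x}\omega$. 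The linear term $\partial_t^2\textbf u$ contributes $X''(t)$ after commuting with the integrals. For the spatial operator, two integrations by parts move all derivatives onto $e^{\omega\cdot x}$; since $\Delta_x e^{\omega\cdot x}=|\omega|^2 e^{\omega\cdot x}=e^{\omega\cdot x}$ on $S^2$ and $\nabla_x e^{\omega\cdot x}=\omega e^{\omega\cdot x}$, the combination $-c_2^2\Delta\textbf u-(c_1^2-c_2^2)\nabla(\nabla\cdot\textbf u)$, paired with $e^{\omega\cdot x}\omega$ and integrated over $\omega$, should collapse — using $c_1=1$ and the spherical average over $\omega$ — to exactly $-X(t)$ (the same mechanism that produced the $+X(t)$ on the right of \eqref{ODE} for Euler). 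So the linear part yields $X''(t)=X(t)+(\text{nonlinear contribution})$.

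The second and crucial step is to extract a favorable lower bound for the nonlinear contribution $N(t):=\int\int e^{\omega\cdot x}\,\omega\cdot F(\nabla\textbf u,\nabla^2\textbf u)\,d\omega dx$. Looking at the explicit form of $F$, the first term $2(2\sigma_{111}+3\sigma_{11})\nabla(\nabla\cdot\textbf u)^2$ is a pure gradient, so after integration by parts against $e^{\omega\cdot x}\omega$ it becomes $-2(2\sigma_{111}+3\sigma_{11})\int\int e^{\omega\cdot x}(\nabla\cdot\textbf u)^2\,d\omega dx$ (using $\omega\cdot\nabla e^{\omega\cdot x}=|\omega|^2 e^{\omega\cdot x}=e^{\omega\cdot x}$). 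Under the hypotheses \eqref{sigmaconditions}, $2\sigma_{111}+3\sigma_{11}=-O(\lambda^2)+\tfrac34<0$ for $\lambda$ large, so this term is a positive multiple of $\int\int e^{\omega\cdot x}(\nabla\cdot\textbf u)^2$ — a good sign-definite quadratic term, playing the role that $\tfrac12\int\int e^{\omega\cdot x}(\rho-1)^2$ played for Euler. The remaining pieces — the $\nabla|\nabla\times\textbf u|^2$ term with coefficient $2(\sigma_{11}-\sigma_{12})$, the curl term $-4(\sigma_{11}-\sigma_{12})\nabla\times(\cdots)$, and the null forms $Q^i_1,Q^i_2$ — need to be shown harmless. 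The curl term integrates to zero after pairing with the gradient $e^{\omega\cdot x}\omega$ (since $\int \nabla\times(\cdot)\cdot(\omega e^{\omega\cdot x})\,dx=\int(\cdot)\cdot\nabla\times(\omega e^{\omega\cdot x})\,dx$ and $\nabla\times(\omega e^{\omega\cdot x})=(\omega\times\omega)e^{\omega\cdot x}=0$). The $|\nabla\times\textbf u|^2$ term has coefficient $2(\sigma_{11}-\sigma_{12})=\tfrac12-O(\lambda)<0$ for large $\lambda$, which after integration by parts gives $-2(\sigma_{11}-\sigma_{12})\int\int e^{\omega\cdot x}|\nabla\times\textbf u|^2\ge 0$ as well — again sign-favorable. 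For the null-form terms $Q_{ij}(f,g)=\partial_i f\partial_j g-\partial_i g\partial_j f$: each appears with coefficient $O(\lambda)$ times $(2\sigma_{12}-\sigma_{11})=O(\lambda)$, $(\sigma_2-\sigma_3)$, or $\sigma_3$, and the key is that a null form $Q_{ij}$ integrated against a radial-type weight $e^{\omega\cdot x}$ (with $\omega$ frozen) is again a divergence, $\partial_j(f\partial_i g)-\partial_i(f\partial_j g)$, so integrating by parts against $\omega_l e^{\omega\cdot x}$ one gets $\omega_l(\omega_j\partial_i g-\omega_i\partial_j g)f\,e^{\omega\cdot x}$ which is controlled by $\int\int e^{\omega\cdot x}|\nabla\textbf u||\textbf u|$. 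These cross terms must then be absorbed, via Cauchy–Schwarz and the finite propagation speed bound $\int_{|x|\le t+1}\int_{S^2}e^{\omega\cdot x}\,d\omega dx\le Ce^{t+1}(t+1)$ together with the elementary energy bound $\|\nabla\textbf u(t)\|_{L^2}+\|\textbf u_t(t)\|_{L^2}\le C\varepsilon$ (which holds on the existence interval), into the dominant good term plus a small perturbation of $X(t)$ — this is where the smallness $|\sigma_3|\ll\lambda$ and the largeness of $\lambda$ are used, so the good coefficient $O(\lambda^2)$ on $(\nabla\cdot\textbf u)^2$ dominates the $O(\lambda)$ cross terms.

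With those estimates, using Hölder as in the Euler proof, $X(t)^2\le \big(\int\int e^{\omega\cdot x}(\nabla\cdot\textbf u)^2\big)\cdot\big(\int\int e^{\omega\cdot x}\big)\le C e^{t+1}(t+1)\int\int e^{\omega\cdot x}(\nabla\cdot\textbf u)^2$ — here I would need the pairing of $X(t)=\int\int e^{\omega\cdot x}\,\omega\cdot\textbf u$ with $\int\int e^{\omega\cdot x}(\nabla\cdot\textbf u)^2$, which after an integration by parts writing $X(t)=-\int\int e^{\omega\cdot x}(\nabla\cdot\textbf u)\,d\omega dx$ (moving the $\omega\cdot\nabla$ off $e^{\omega\cdot x}$... actually $\omega\cdot\textbf u$ is not a divergence, so more precisely I pair with $\int\int e^{\omega\cdot x}(\nabla\cdot \textbf u)$ after noting $\int\int e^{\omega\cdot x}(\nabla\cdot\textbf u)d\omega dx = -\int\int(\omega\cdot\nabla e^{\omega\cdot x})... $ — in any case one arrives at $|X(t)|\le C(e^{t}(t+1))^{1/2}\big(\int\int e^{\omega\cdot x}(\nabla\cdot\textbf u)^2\big)^{1/2}$, same structure as Euler). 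This produces
\begin{equation}
X''(t)\ge \frac{C\,X^2(t)}{e^t(t+1)}+X(t),\qquad t\ge 0,
\end{equation}
exactly the hypothesis of Lemma \ref{OOO} with $n=3$, $R_0=1$, whence $X(t)$ blows up with lifespan $T_0\le e^{C\varepsilon^{-1}}$.

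The main obstacle I anticipate is the second step: controlling the null-form remainders $Q^i_1,Q^i_2$ and the $|\nabla\times\textbf u|^2$ term so that the single good term $-2(2\sigma_{111}+3\sigma_{11})\int\int e^{\omega\cdot x}(\nabla\cdot\textbf u)^2$ genuinely dominates. The delicate point is that $\nabla\cdot\textbf u$ and $\nabla\times\textbf u$ propagate at different speeds $c_1=1$ and $c_2<1$, and the weight $e^{\omega\cdot x}$ grows like $e^{|x|}$; one must verify that no term forces a weight growing faster than $e^{t+1}$ (which finite propagation speed for the $c_1=1$ part guarantees, but the $c_2<1$ components are even better localized, so this is fine) and that the cross terms, after Cauchy–Schwarz, carry at most the "energy $\times$ good term" combination rather than a square of the good term. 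The scaling $\sigma_{111}=-O(\lambda^2)$, $\sigma_{12}=O(\lambda)$, $|\sigma_3|\ll\lambda$, with $\lambda\gg1$, is precisely engineered so that the good quadratic coefficient is one power of $\lambda$ larger than every competing coefficient, making the absorption possible — so the proof reduces to bookkeeping the powers of $\lambda$ carefully in each of the (finitely many) null-form contributions.
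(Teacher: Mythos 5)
Your setup (the test function $e^{\omega\cdot x}\omega$, the identity $X''=X+\int\int e^{\omega\cdot x}\,\omega\cdot F$, the vanishing of the curl term, the sign of the $(\nabla\cdot\textbf u)^2$ and $|\nabla\times\textbf u|^2$ contributions, and the final reduction to Lemma \ref{OOO}) matches the paper, but the heart of the argument — the treatment of $Q_1,Q_2$ — has a genuine gap, and it is exactly where the paper's proof does something different. You propose to treat the null-form contributions as perturbative errors, bounded after one integration by parts by $\int\int e^{\omega\cdot x}|\nabla\textbf u|\,|\textbf u|$ and absorbed into the $O(\lambda^2)\int\int e^{\omega\cdot x}(\nabla\cdot\textbf u)^2$ term. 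This cannot work as stated: by the paper's identity \eqref{1}, the weighted norm of the full gradient (and, a fortiori, the zero-order quantity $\int\int e^{\omega\cdot x}(\textbf u\cdot\omega)^2$ that inevitably appears when derivatives fall on $e^{\omega\cdot x}\omega$) is \emph{not} controlled by the divergence and curl pieces alone, and the $Q_1$ term carries the large coefficient $4(2\sigma_{12}-\sigma_{11})\approx 8\lambda$, so it produces an $O(\lambda)\int\int e^{\omega\cdot x}(\textbf u\cdot\omega)^2$ contribution that neither $\lambda^2\int\int e^{\omega\cdot x}(\nabla\cdot\textbf u)^2$ nor $\lambda\int\int e^{\omega\cdot x}|\nabla\times\textbf u|^2$ can absorb. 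The paper resolves this by computing the $Q_1$ integral \emph{exactly} and exploiting the sign $\sigma_{12}=O(\lambda)>0$: the $(\textbf u\cdot\omega)^2$ term comes out \emph{positive} (see \eqref{Q1} and \eqref{wiee}), its own bad $(\nabla\cdot\textbf u)^2$ parts being absorbed by the $\lambda^2$ term, and it is precisely this positive $\frac{\lambda}{2}\int\int e^{\omega\cdot x}(\textbf u\cdot\omega)^2$ term — paired by Cauchy--Schwarz with $X(t)=\int\int e^{\omega\cdot x}(\textbf u\cdot\omega)$ — that generates the $X^2$ term in the differential inequality. In other words, $Q_1$ is not an error to be beaten by $\lambda^2$; it is the source of the blow-up mechanism, and your scheme discards it.

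Two further points. First, your fallback on an ``elementary energy bound'' $\|\nabla\textbf u(t)\|_{L^2}+\|\textbf u_t(t)\|_{L^2}\le C\varepsilon$ on the whole existence interval is unproven for this quasilinear system and, in any case, unusable here: the weight $e^{\omega\cdot x}$ grows like $e^{|x|}\sim e^{t}$ on the support of the solution, so unweighted $L^2$ smallness gives no control of the weighted quadratic terms you need to absorb. The paper's proof is purely algebraic (sign bookkeeping with the weighted identities) and uses no a priori energy estimate. Second, your alternative Cauchy--Schwarz step $X(t)=-\int\int e^{\omega\cdot x}(\nabla\cdot\textbf u)\,d\omega dx$, hence $X^2\le Ce^{t}(t+1)\int\int e^{\omega\cdot x}(\nabla\cdot\textbf u)^2$, is correct in itself and would be a legitimate variant \emph{if} the null-form remainders could be dominated — but as explained they cannot without using the positive $(\textbf u\cdot\omega)^2$ contribution, so you should follow the paper and run the Hölder step against $\int\int e^{\omega\cdot x}(\textbf u\cdot\omega)^2$ with $X(t)=\int\int e^{\omega\cdot x}(\textbf u\cdot\omega)$, which is also what makes the initial condition \eqref{3311} the natural one.
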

\begin{proof}
Multiply equation \eqref{elastic} by $e^{\omega\cdot x}\omega^i$, sum over i and integration in $\mathbb{R}^3\times S^2$:
\begin{align}\label{elasticie}
&\frac{d^2}{dt^2}\int_{\mathbb{R}^3}\int_{S^2}e^{\omega\cdot x}(\textbf u\cdot \omega)\ d\omega dx\\
=&\int_{\mathbb{R}^3}\int_{S^2}e^{\omega\cdot x}(\textbf u\cdot \omega)\ d\omega dx
+\int_{\mathbb{R}^3}\int_{S^2}F(\nabla\te u,\nabla^2\te u)\cdot\omega e^{\omega\cdot x}\ d\omega dx.\notag
\end{align}
Substitute the expansion form of F, the last term in \eqref{elasticie} becomes:
\begin{align}\label{W}
&\int_{\mathbb{R}^3}\int_{S^2} F(\nabla\te u,\nabla^2\te u)\cdot \omega e^{\omega\cdot x}\ d\omega dx\\
=&-\int_{\mathbb{R}^3}\int_{S^2}2(2\sigma_{111}+3\sigma_{11})|\nabla\cdot\textbf u|^2e^{\omega\cdot x}\ d\omega dx\\
&-\int_{\mathbb{R}^3}\int_{S^2}2(\sigma_{11}-\sigma_{12})|\nabla\times\textbf u|^2e^{\omega\cdot x}\ d\omega dx\\
&-\int_{\mathbb{R}^3}\int_{S^2}4(\sigma_{11}-\sigma_{12})(\nabla\cdot\textbf u\nabla\times\textbf u)\cdot \nabla\times(\omega e^{\omega\cdot x})\ d\omega dx\\
&+\int_{\mathbb{R}^3}\int_{S^2}Q_1(\textbf u,\partial\textbf u)\cdot\omega e^{\omega\cdot x}\ d\omega dx\\
&+\int_{\mathbb{R}^3}\int_{S^2}Q_2(\textbf u,\partial\textbf u)\cdot\omega e^{\omega\cdot x}\ d\omega dx
\end{align}

In fact, $\nabla\times(\omega e^{\omega\cdot x})=0$, $(3.42)$ disappears. Then by calculation, we have the following identity:
\begin{align}\label{1}
&\int_{\mathbb{R}^3}\int_{S^2}| \nabla\textbf u|^2e^{\omega\cdot x}\ d\omega dx\\
=&\int_{\mathbb{R}^3}\int_{S^2}| \nabla\cdot\textbf u|^2e^{\omega\cdot x}\ d\omega dx
+\int_{\mathbb{R}^3}\int_{S^2}| \nabla\times\textbf u|^2e^{\omega\cdot x}\ d\omega dx\notag\\
+&\int_{\mathbb{R}^3}\int_{S^2} (\textbf u\cdot\omega)(\nabla\cdot\textbf u)e^{\omega\cdot x}\ d\omega dx
+\int_{\mathbb{R}^3}\int_{S^2}| \textbf u\cdot \omega|^2e^{\omega\cdot x}\ d\omega dx\notag\\
\leq&\frac{3}{2}\int_{\mathbb{R}^3}\int_{S^2}| \nabla\cdot\textbf u|^2e^{\omega\cdot x}\ d\omega dx
+\int_{\mathbb{R}^3}\int_{S^2}| \nabla\times\textbf u|^2e^{\omega\cdot x}\ d\omega dx\notag\\
+&\frac{3}{2}\int_{\mathbb{R}^3}\int_{S^2}| \textbf u\cdot \omega|^2e^{\omega\cdot x}\ d\omega dx\notag
\end{align}

\eqref{1} means that: the $L^2$ norm of $\nabla \textbf u$ with weight $e^{\omega\cdot x}$ can be controled by the same norm of $\nabla\cdot\textbf u,\ \nabla\times\textbf u,\ \textbf u\cdot\omega$ with the same weight. Since $\nabla\cdot\textbf u$ and $\nabla\times\textbf u$ appear in $(3.40)\ (3.41)$, in order to control $\textbf u\cdot\omega$, we need to consider $(3.43)$, divide $Q^i_1(\textbf u,\nabla \textbf u)$ into two parts: $Q_{jk}(u^j,\partial_iu^k)$ and $Q_{ik}(u^k,\partial_ju^j)$, deal with these integration in detil:
\begin{align}
&\int_{\mathbb{R}^3}\int_{S^2}Q_{jk}(u^j,\partial_iu^k)\cdot\omega^i e^{\omega\cdot x}\ d\omega dx\notag\\
=&\int_{\mathbb{R}^3}\int_{S^2}\partial_ju^j\partial_i\partial_ku^k\omega^ie^{\omega\cdot x}\ d\omega dx
-\int_{\mathbb{R}^3}\int_{S^2}\partial_ku^j\partial_j\partial_iu^k\omega^ie^{\omega\cdot x}\ d\omega dx\notag\\
=&\int_{\mathbb{R}^3}\int_{S^2}\partial_ju^j\partial_i\partial_ku^k\omega^ie^{\omega\cdot x}\ d\omega dx-\int_{\mathbb{R}^3}\int_{S^2}\partial_ju^j\partial_i\partial_ku^k\omega^ie^{\omega\cdot x}\ d\omega dx\notag\\
-&\int_{\mathbb{R}^3}\int_{S^2}\partial_ju^j\partial_iu^k\omega^i\omega^ke^{\omega\cdot x}\ d\omega dx
-\int_{\mathbb{R}^3}\int_{S^2} u^j\partial_i\partial_ku^k\omega_i\omega_je^{\omega\cdot x}\ d\omega dx\notag\\
-&\int_{\mathbb{R}^3}\int_{S^2}u^j\partial_iu^k\omega^i\omega^j\omega^ke^{\omega\cdot x}\ d\omega dx\notag
\end{align}
That is:
\begin{align}
&\int_{\mathbb{R}^3}\int_{S^2}Q_{jk}(u^j,\partial_iu^k)\cdot\omega^i e^{\omega\cdot x}\ d\omega dx\\
=&-\frac{1}{2}\int_{\mathbb{R}^3}\int_{S^2}|\nabla\cdot\textbf u|^2e^{\omega\cdot x}\ d\omega dx+\frac{1}{2}\int_{\mathbb{R}^3}\int_{S^2}|\nabla\cdot\textbf u|^2e^{\omega\cdot x}\ d\omega dx\notag\\
-&\int_{\mathbb{R}^3}\int_{S^2}(\nabla\cdot\textbf u)\partial_i(\textbf u\cdot \omega)\omega^i\ d\omega dx
-\int_{\mathbb{R}^3}\int_{S^2}\partial_i(\nabla\cdot\textbf u)(\textbf u\cdot \omega)\omega^ie^{\omega\cdot x}\ d\omega dx\notag\\
-&\int_{\mathbb{R}^3}\int_{S^2}\partial_i(\textbf u\cdot\omega)(\textbf u\cdot \omega)\omega^ie^{\omega\cdot x}\ d\omega dx\notag\\
=&\int_{\mathbb{R}^3}\int_{S^2}(\textbf u\cdot \omega)(\nabla\cdot \textbf u)e^{\omega\cdot x}+\frac{1}{2}\int_{\mathbb{R}^3}\int_{S^2}(\textbf u\cdot\omega)^2e^{\omega\cdot x}\ d\omega dx\notag\\
\geq& \frac{1}{4}\int_{\mathbb{R}^3}\int_{S^2}(\textbf u\cdot\omega)^2e^{\omega\cdot x}\ d\omega dx-\int_{\mathbb{R}^3}\int_{S^2}|\nabla\cdot \textbf u|^2e^{\omega\cdot x}\ d\omega dx\notag
\end{align}
Another term can be calculated in similar way:
\begin{align}
&\int_{\mathbb{R}^3}\int_{S^2}Q_{ik}(u^k,\partial_ju^j)\omega^ie^{\omega\cdot x}\ d\omega dx\\
=&-\int_{\mathbb{R}^3}\int_{S^2}\partial_i\partial_ku^k\partial_ju^j\omega^ie^{\omega\cdot x}\ d\omega dx
-\int_{\mathbb{R}^3}\int_{S^2}\partial_iu^k\partial_ju^j\omega^i\omega^ke^{\omega\cdot x}\ d\omega dx\notag\\
-&\int_{\mathbb{R}^3}\int_{S^2} \partial_ku^k\partial_i\partial_ju^j\omega^i e^{\omega\cdot x}\ d\omega dx\notag\\
=&\int_{\mathbb{R}^3}\int_{S^2} |\nabla\cdot\textbf u|^2e^{\omega\cdot x}\ d\omega dx-\int_{\mathbb{R}^3}\int_{S^{n-1}}\partial_i(\textbf u\cdot\omega)(\nabla\cdot\textbf u)\omega^i e^{\omega\cdot x}\ d\omega dx\notag\\
\leq&\frac{1}{12}\int_{\mathbb{R}^3}\int_{S^2}|\partial\textbf u|^2e^{\omega\cdot x}\ d\omega dx+4\int_{\mathbb{R}^3}\int_{S^2} |\nabla\cdot\textbf u|^2e^{\omega\cdot x}\ d\omega dx\notag\\
\leq&\frac{33}{8}\int_{\mathbb{R}^3}\int_{S^2} |\nabla\cdot\textbf u|^2e^{\omega\cdot x}\ d\omega dx+\frac{1}{12}\int_{\mathbb{R}^3}\int_{S^2} |\nabla\times\textbf u|^2e^{\omega\cdot x}\ d\omega dx\notag\\
+&\frac{1}{8}\int_{\mathbb{R}^3}\int_{S^2} |\textbf u\cdot\omega|^2e^{\omega\cdot x}\ d\omega dx\notag
\end{align}
Using \eqref{sigmaconditions}, for $Q_1(\textbf u,\partial\textbf u)$:
\begin{align}\label{Q1}
&\int_{\mathbb{R}^3}\int_{S^2}Q_1(\textbf u,\partial\textbf u)\cdot\omega e^{\omega\cdot x}\ d\omega dx \\
\geq&  8\lambda(\frac{1}{8}\int_{\mathbb{R}^3}\int_{S^2} |\textbf u\cdot\omega|^2e^{\omega\cdot x}\ d\omega dx
-6\int_{\mathbb{R}^3}\int_{S^2} |\nabla\cdot\textbf u|^2e^{\omega\cdot x}\ d\omega dx \notag\\
-&\frac{1}{12}\int_{\mathbb{R}^3}\int_{S^2} |\nabla\times\textbf u|^2e^{\omega\cdot x}\ d\omega dx)\notag
\end{align}
As for $Q_2(\textbf u,\partial\textbf u)$:
\begin{align}\label{Q2}
&\mid\int_{\mathbb{R}^3}\int_{S^2}Q_2(\textbf u,\partial\textbf u)\cdot\omega e^{\omega\cdot x}\ d\omega dx\mid \\
\leq&\frac{\lambda}{30}\int_{\mathbb{R}^3}\int_{S^2}|\nabla\textbf u|^2 e^{\omega\cdot x}\ d\omega dx\notag\\
\leq&\frac{\lambda}{20}(\int_{\mathbb{R}^3}\int_{S^2}| \nabla\cdot\textbf u|^2e^{\omega\cdot x}\ d\omega dx
+\int_{\mathbb{R}^3}\int_{S^2}| \nabla\times\textbf u|^2e^{\omega\cdot x}\ d\omega dx\notag\\
&+\int_{\mathbb{R}^3}\int_{S^2}| \textbf u\cdot \omega|^2e^{\omega\cdot x}\ d\omega dx)\notag
\end{align}
Combine \eqref{W}-\eqref{1},\eqref{Q1},\eqref{Q2}, we can get eventually:
\begin{align}\label{wiee}
&\int_{\mathbb{R}^3}\int_{S^2}F(\nabla\te u,\nabla^2\te u)\cdot\omega e^{\omega\cdot x}\ d\omega dx\notag\\
\geq& \lambda^2\int_{\mathbb{R}^3}\int_{S^2}|\nabla\cdot\textbf u|^2e^{\omega\cdot x}\ d\omega dx+\lambda\int_{\mathbb{R}^3}\int_{S^2}|\nabla\times\textbf u|^2e^{\omega\cdot x}\ d\omega dx\\
+&\frac{\lambda}{2}\int_{\mathbb{R}^3}\int_{S^2}|\textbf u\cdot \omega|^2e^{\omega\cdot x}\ d\omega dx\notag
\end{align}
Substitute \eqref{wiee} into \eqref{elasticie} :
\begin{align}
&\frac{d}{dt^2}\int_{\mathbb{R}^3}\int_{S^2}e^{\omega\cdot x}(\textbf u\cdot \omega)\ d\omega dx\\
\geq& \int_{\mathbb{R}^3}\int_{S^2}e^{\omega\cdot x}(\textbf u\cdot \omega)\ d\omega dx
+\frac{\lambda}{2}\int_{\mathbb{R}^3}\int_{S^2}|\textbf u\cdot \omega|^2e^{\omega\cdot x}\ d\omega dx\notag
\end{align}
Let
\begin{equation}
X(t)=\int_{\mathbb{R}^3}\int_{S^2}e^{\omega\cdot x}(\textbf u\cdot \omega)\ d\omega dx
\end{equation}
This inequality can be transformed into:
\begin{equation}
X^{''}(t)\geq X(t)+\frac{\lambda X^2(t)}{2e^{t}(t+1)}
\end{equation}

Same argument tells us: With initial data \eqref{3311}, X(t) will blow in finite time.
\end{proof}
\begin{rem}
2D elastodynamics can be treated in the same way.
\end{rem}
\subsection{Magnetohydrodynamics}
3D ideal magnetohydrodynamic can be written as follows:
\begin{equation}\label{MHD3D}
\left \{
\begin{aligned}
&\partial_t\rho+\nabla\cdot(\rho\te u)=0, \\
&\frac{\p (\rho\te u)}{\p t}+\nabla\cdot(\rho\te u\otimes\te u)+\nabla p-\nabla\cdot(\te H\otimes \te H)+\nabla(\frac{1}{2}\te H^2)=0,\\
&\p_t(\frac{\te H}{\rho})+\te (\te u\cdot\nabla)\frac{\te H}{\rho}=(\frac{\te H}{\rho}\cdot\nabla)\te u,\ \ \nabla\cdot\te H=0.
\end{aligned} \right.
\end{equation}

Consider 2D magnetohydrodynamics with special vertical magnetic field ,that is: $\te u(t,x_1,x_2)=(\te u_1,\te u_2,0),\ \te H(t,x_1,x_2)=(0,0,b)$. And assume $p=\frac{\rho^2}{2}$ again, then above-mentioned equations can be reduced to:
\begin{equation}\label{MHD2D}
\left \{
\begin{aligned}
&\partial_t\rho+\nabla\cdot(\rho\te u)=0, \\
&\frac{\p (\rho\te u)}{\p t}+\nabla\cdot(\rho\te u\otimes\te u)+\frac{1}{2}\nabla(\rho^2+ b^2)=0,\\
&\frac{d}{dt}(\frac{b}{\rho})=0.
\end{aligned} \right.
\end{equation}

Take the initial data as:
\begin{align}\label{MHD2Dinitialdata}
t=0:\ \te u=\varepsilon \te u_0,\ b=b_0+\varepsilon h_0,\ \rho=1+\varepsilon\rho_0,
\end{align}
\begin{thm}
For the Cauchy problem \eqref{MHD2D}\eqref{MHD2Dinitialdata}, if the initial data satisfies:
\begin{align}
supp\ \te u_0,h_0,\rho_0&\subset\{x||x|\leq 1\},\ \rho_0\leq0,\ h_0\geq 0.\\
\sqrt{1+b_0^2}\int_{\mathbb R^2}\int_{S^1}e^{\omega\cdot x}\rho_0\ d\omega dx&+\int_{\mathbb R^2}\int_{S^1}e^{\omega\cdot x}(1+\rho_0)(\textbf u_0\cdot \omega)\ d\omega dx>0
\end{align}
then the classical solution blows up in finite time, the lifespan of this solution is less than $C\varepsilon^{-2}$.
\end{thm}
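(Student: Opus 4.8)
The plan is to reproduce the proof of Theorem~\ref{Euletthm} almost verbatim; the only genuinely new ingredient is the algebraic constraint coming from the induction equation $\frac{d}{dt}(b/\rho)=0$. We may assume $b_0\ge 0$, since \eqref{MHD2D} is invariant under $b\mapsto -b$. From the mass equation together with $\frac{d}{dt}(b/\rho)=0$ one sees that $w:=b/\rho$ is transported along particle paths, so its range is invariant in time. At $t=0$ we have $w\equiv b_0$ outside $\{|x|\le 1\}$, while on $\{|x|\le 1\}$, $w=(b_0+\varepsilon h_0)/(1+\varepsilon\rho_0)\ge b_0$ because $h_0\ge 0$, $\rho_0\le 0$ and $\varepsilon$ is small. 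Hence $b/\rho\ge b_0\ge 0$, i.e. $b\ge b_0\rho$, for all $t$; since $\rho>0$ for a classical solution this yields the pointwise inequality
\[
\rho^{2}+b^{2}\ \ge\ (1+b_0^{2})\,\rho^{2}.
\]
Moreover the fast magnetosonic speed of the background state $(\rho,b)=(1,b_0)$ is $\sqrt{1+b_0^{2}}$, so by finite propagation speed $\rho-1$, $b-b_0$ and $\te u$ are supported in $\{|x|\le 1+\sqrt{1+b_0^{2}}\,t\}$ (up to a harmless $O(\varepsilon t)$ correction).

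Next, set, exactly as for the Euler equation,
\[
X(t)=\int_{\mathbb R^{2}}\int_{S^{1}}e^{\omega\cdot x}(\rho-1)\,d\omega\,dx,\qquad
Y(t)=\int_{\mathbb R^{2}}\int_{S^{1}}e^{\omega\cdot x}\rho(\te u\cdot\omega)\,d\omega\,dx .
\]
Multiplying the mass equation by $e^{\omega\cdot x}$, and the momentum equation---after subtracting the constant background total pressure $\tfrac12(1+b_0^{2})$ inside the gradient, so that every integrand is compactly supported---by $e^{\omega\cdot x}\omega$, integrating over $\mathbb R^{2}\times S^{1}$ and integrating by parts as in Theorem~\ref{Euletthm}, one obtains $X'(t)=Y(t)$ and
\[
Y'(t)=\int e^{\omega\cdot x}\rho(\te u\cdot\omega)^{2}\,d\omega\,dx+\tfrac12\int e^{\omega\cdot x}\bigl(\rho^{2}+b^{2}-1-b_0^{2}\bigr)\,d\omega\,dx .
\]
Discarding the non-negative kinetic term, using the pointwise bound above and $\rho^{2}-1=(\rho-1)^{2}+2(\rho-1)$,
\[
X''(t)=Y'(t)\ \ge\ \tfrac{1+b_0^{2}}{2}\int e^{\omega\cdot x}(\rho-1)^{2}\,d\omega\,dx\;+\;(1+b_0^{2})\,X(t).
\]

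To close the inequality, Cauchy--Schwarz together with property (iii) of $F$ gives
\[
X(t)^{2}\le\Bigl(\int e^{\omega\cdot x}(\rho-1)^{2}\,d\omega\,dx\Bigr)\Bigl(\int_{|x|\le 1+\sqrt{1+b_0^{2}}\,t}F(x)\,dx\Bigr)\le C\,e^{\sqrt{1+b_0^{2}}\,t}(1+t)^{1/2}\int e^{\omega\cdot x}(\rho-1)^{2}\,d\omega\,dx ,
\]
whence $X''(t)\ge (1+b_0^{2})X(t)+c(1+b_0^{2})X(t)^{2}\big/\bigl(e^{\sqrt{1+b_0^{2}}\,t}(1+t)^{1/2}\bigr)$. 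Rescaling time by $s=\sqrt{1+b_0^{2}}\,t$ and putting $\widehat X(s)=X\!\bigl(s/\sqrt{1+b_0^{2}}\bigr)$ turns this into
\[
\widehat X''(s)\ \ge\ \widehat X(s)+\frac{c\,\widehat X(s)^{2}}{e^{s}(1+s)^{1/2}} ,
\]
which is precisely the hypothesis of Lemma~\ref{OOO} with $n=2$ and $R_0=1$. Finally $\widehat X(0)+\widehat X'(0)=X(0)+\tfrac{1}{\sqrt{1+b_0^{2}}}Y(0)$ is, up to the factor $\tfrac{\varepsilon}{\sqrt{1+b_0^{2}}}$, the quantity appearing in the hypothesis of the theorem, hence positive and of size $O(\varepsilon)$. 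Lemma~\ref{OOO} then forces $\widehat X$, and therefore $X$ and the solution, to break down no later than $t=C\varepsilon^{-2}$.

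The one delicate point is the first step: one must use the transport identity $\frac{d}{dt}(b/\rho)=0$ together with the one-sided conditions $\rho_0\le 0$, $h_0\ge 0$ to absorb the magnetic pressure $b^{2}$ into the extra factor $1+b_0^{2}$ multiplying $\rho^{2}$. It is equally important that the linear term retains the coefficient $1+b_0^{2}$ and that the light cone opens with speed $\sqrt{1+b_0^{2}}$ rather than $1$: these are exactly the two places where the rescaling $s=\sqrt{1+b_0^{2}}\,t$ extracts the factor $\sqrt{1+b_0^{2}}$ occurring in the hypothesis. Everything else is the computation already carried out in Theorem~\ref{Euletthm}.
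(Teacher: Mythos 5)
Your proposal is correct and follows essentially the same route as the paper: the transport identity $\frac{d}{dt}(b/\rho)=0$ together with $\rho_0\le 0$, $h_0\ge 0$ gives $b/\rho\ge b_0$, hence $\rho^2+b^2-1-b_0^2\ge (1+b_0^2)(\rho^2-1)$, and the same functionals $X,Y$ with the cone of speed $\sqrt{1+b_0^2}$ produce the same differential inequality, closed by the ODE lemma (Lemma \ref{OOO}) with $n=2$. Your explicit time rescaling $s=\sqrt{1+b_0^2}\,t$ and the matching of $X(0)+a^{-1}Y(0)$ with the sign hypothesis merely spell out details the paper leaves implicit.
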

\begin{proof}
From $\frac{d}{dt}(\frac{b}{\rho})=0$, concludes:
\begin{equation}
\frac{b}{\rho}=\frac{b_0+\varepsilon h_0}{1+\varepsilon\rho_0}\geq b_0.
\end{equation}
Multiply $e^{\omega\cdot x}$ and $e^{\omega\cdot x}\omega$ to the first two equations of \eqref{MHD2D}. Then:
\begin{align}
\frac{d}{dt}\int_{\mathbb R^2}\int_{S^1}\rho e^{\omega\cdot x}\ d\omega dx&=\int_{\mathbb R^2}\int_{S^1} \rho(\te u\cdot \omega)e^{\omega\cdot x}\ d\omega dx\\
\frac{d}{dt}\int_{\mathbb R^2}\int_{S^1} \rho(\te u\cdot \omega)e^{\omega\cdot x}\ d\omega dx&=\int_{\mathbb R^2}\int_{S^1} \rho(\te u\cdot \omega)^2e^{\omega\cdot x}\ d\omega dx\\
&+\frac{1}{2}\int_{\mathbb R^2}\int_{S^1}(b^2+\rho^2-b_0^2-1)e^{\omega\cdot x}\ d\omega dx\notag\\
&\geq \int_{\mathbb R^2}\int_{S^1} \rho(\te u\cdot \omega)^2e^{\omega\cdot x}\ d\omega dx\notag\\
&+\frac{1}{2}(b_0^2+1) \int_{\mathbb R^2}\int_{S^1} (\rho^2-1)e^{\omega\cdot x}\ d\omega dx\notag
\end{align}
Still let F(x),\ X(t),\ Y(t) be:
\begin{align}
&F(x)=\int_{S^{1}}e^{\omega\cdot x}d\omega\\
&X(t)=\int_{\mathbb R^2}F(x)(\rho(t,x)-1)\ dx=\int_{\mathbb R^2}\int_{S^1}e^{\omega\cdot x}(\rho(t,x)-1)\ d\omega dx\\
&Y(t)=\int_{\mathbb R^2}F(x)\rho(t,x)(\textbf u\cdot\omega)\ dx=\int_{\mathbb R^2}\int_{S^1}e^{\omega\cdot x}\rho(t,x)(\textbf u\cdot \omega)\ d\omega dx
\end{align}
Direct calculation derives:
\begin{equation}
\int_{|x|\leq at+1}F(x)\ dx\leq  C e^{at+1}(at+1)^{\frac{1}{2}}
\end{equation}
$a^2=b_0^2+1$,\ then:
\begin{align}
\frac{1}{2}(b_0^2+1) \int_{\mathbb R^2}\int_{S^1} (\rho^2-1)e^{\omega\cdot x}\ d\omega dx&\geq a^2X(t)+\frac{1}{2}(b_0^2+1)\frac{CX^2(t)}{a^{\frac{1}{2}}e^{at+1}(t+1)^{\frac{1}{2}}}\\
&\geq a^2X(t)+\frac{CX^2(t)}{e^{at+1}(t+1)^{\frac{1}{2}}}\notag
\end{align}
Eventually, it comes to an ordinary differential inequality:
\begin{equation}
 X^{''}(t)\geq a^2X(t)+\frac{CX^2(t)}{e^{at}(t+1)^{\frac{1}{2}}}
\end{equation}
By Lemma\eqref{OOO} we conclude that X(t) will blow up in finite time, which implies the conclusion of the theorem.
\end{proof}
\section{Acknowledgement}
Yi Zhou was supported by Key Laboratory of Mathematics for Nonlinear Sciences (Fudan University), Ministry of Education of China. Shanghai Key Laboratory for Contemporary Applied Mathematics, School of Mathematical Sciences, Fudan University, P.R. China, NSFC (grants No. 11421061), 973 program (grant No. 2013CB834100) and 111 project.

\bibliographystyle{plain}
\bibliography{ref}

\begin{thebibliography}{10}

\bibitem{agemi2000global}
Rentaro Agemi.
\newblock Global existence of nonlinear elastic waves.
\newblock {\em Inventiones mathematicae}, 142(2):225--250, 2000.

\bibitem{alinhac2001null1}
Serge Alinhac.
\newblock The null condition for quasilinear wave equations in two space
  dimensions i.
\newblock {\em Inventiones mathematicae}, 145(3):597--618, 2001.

\bibitem{alinhac2001null2}
Serge Alinhac.
\newblock The null condition for quasilinear wave equations in two space
  dimensions, ii.
\newblock {\em American Journal of Mathematics}, 123(6):1071--1101, 2001.

\bibitem{christodoulou2014compressible}
Demetrios Christodoulou and Shuang Miao.
\newblock {\em Compressible Flow and Euler's Equations}, volume~9.
\newblock International Press Somerville, MA, 2014.

\bibitem{guo1999formation}
Yan Guo and A~Shadi Tahvildar-Zadeh.
\newblock Formation of singularities in relativistic fluid dynamics and in
  spherically symmetric plasma dynamics.
\newblock {\em Nonlinear partial differential equations,(AMS, Providence,
  1999)}, 4, 1999.

\bibitem{john1974formation}
F~John.
\newblock Formation of singularities in one-dimensional nonlinear wave
  propagation.
\newblock {\em Communications on pure and applied mathematics}, 27(3):377--405,
  1974.

\bibitem{John1985Blow}
F~John.
\newblock Blow-up of radial solutions of $u_{tt}=c^2(u_t) \delta u$ in three
  space dimensions.
\newblock {\em Mat.apl.comput}, 1985.

\bibitem{klainerman1980formation}
Sergiu Klainerman and Andrew Majda.
\newblock Formation of singularities for wave equations including the nonlinear
  vibrating string.
\newblock {\em Communications on Pure and Applied Mathematics}, 33(3):241--263,
  1980.

\bibitem{lax1964development}
Peter~D Lax.
\newblock Development of singularities of solutions of nonlinear hyperbolic
  partial differential equations.
\newblock {\em Journal of Mathematical Physics}, 5(5):611--613, 1964.

\bibitem{LiTasien1994global}
Tasien Li.
\newblock {\em Global classical solutions for quasilinear hyperbolic systems},
  volume~32.
\newblock John Wiley \& Sons, 1994.

\bibitem{LiTasien2017}
Tatsien Li and Yi~Zhou.
\newblock {\em Nonlinear wave equations}, volume~2.
\newblock Springer-Verlag, Berlin, 2017.

\bibitem{liu1979development}
Tai-Ping Liu.
\newblock Development of singularities in the nonlinear waves for quasi-linear
  hyperbolic partial differential equations.
\newblock {\em Journal of Differential Equations}, 33(1):92--111, 1979.

\bibitem{makino1990solution}
Tetu Makino and Beno{\^\i}t Perthame.
\newblock Sur les solution {\`a} sym{\'e}trie sph{\'e}rique de l¡¯equation
  d¡¯euler-poisson pour l¡¯evolution d¡¯etoiles gazeuses.
\newblock {\em Japan Journal of Applied Mathematics}, 7(1):165, 1990.

\bibitem{perthame1990non}
Beno{\^\i}t Perthame.
\newblock Non-existence of global solutions to euler-poisson equations for
  repulsive forces.
\newblock {\em Japan Journal of Applied Mathematics}, 7(2):363--367, 1990.

\bibitem{rammaha1989formation}
MA~Rammaha.
\newblock Formation of singularities in compressible fluids in two-space
  dimensions.
\newblock {\em Proceedings of the American Mathematical Society},
  107(3):705--714, 1989.

\bibitem{rammaha1994formation}
MA~Rammaha.
\newblock On the formation of singularities in magnetohydrodynamic waves.
\newblock {\em Journal of mathematical analysis and applications},
  188(3):940--955, 1994.

\bibitem{sideris1985formation}
Thomas~C Sideris.
\newblock Formation of singularities in three-dimensional compressible fluids.
\newblock {\em Communications in Mathematical Physics}, 101(4):475--485, 1985.

\bibitem{tahvildar1998relativistic}
A~Shadi Tahvildar-Zadeh.
\newblock Relativistic and nonrelativistic elastodynamics with small shear
  strains.
\newblock In {\em Annales de l'IHP Physique th{\'e}orique}, volume~69, pages
  275--307, 1998.

\end{thebibliography}
%    Text of article.

%    Bibliographies can be prepared with BibTeX using amsplain,
%    amsalpha, or (for "historical" overviews) natbib style.

%    Insert the bibliography data here.

\end{document}